\newtheorem{theorem}{Theorem}
\newtheorem{proposition}{Proposition}
\newtheorem{lemma}{Lemma}
\newtheorem{conjecture}{Conjecture}
\newtheorem{problem}{Problem}
\newtheorem{definition}{Definition}
\newcommand{\rmnum}[1]{\romannumeral #1}
\newcommand{\Rmnum}[1]{\expandafter\@slowromancap\romannumeral #1@}
\title{Disproof of a conjecture on the minimum Wiener index of signed trees}
\author{\small   Songlin Guo \quad  Wei Wang\thanks{Corresponding author. Email: wangwei.math@gmail.com} \quad Chuanming Wang \\
	\\
	{\footnotesize School of Mathematics, Physics and Finance, Anhui Polytechnic University, Wuhu 241000, P. R. China}
}
\date{}
\begin{document}
	\maketitle
	\begin{abstract}
		The Wiener index of a connected graph is the sum of  distances between all unordered pairs of vertices. Sam Spiro [The Wiener index of signed graphs, Appl. Math. Comput., 416(2022)126755] recently introduced the  Wiener index for a signed graph and conjectured that the path $P_n$ with alternating signs has the minimum Wiener index among all signed trees with $n$ vertices. By constructing an infinite family of counterexamples, we prove that the conjecture is false  whenever $n$ is at least 30.  \\
		
		\noindent\textbf{Keywords}:
		Wiener index; signed tree; signed graph  
		
		\noindent
		\textbf{AMS Classification}: 05C09; 05C22
	\end{abstract}
\section{Introduction}
\label{intro}

A \emph{signed graph} is a graph where each edge has a positive or negative sign. We usually write a signed graph as a pair $(G,\sigma)$, where $G$ is the underlying graph and $\sigma\colon\,E(G)\mapsto \{+1,-1\}$ describes the sign of each edge.  For a path $P$ in $(G,\sigma)$, the \emph{length} of $P$ (under the signing $\sigma$) is $\ell_\sigma(P)=|\Sigma_{e\in E(P)}\sigma(e)|$. A path $P$ in $(G,\sigma)$ is called  a $uv$-\emph{path} if it has $u$ and $v$ as its endvertices. For two distinct vertices  $u,v\in V(G)$, the \emph{signed distance} \cite{Spiro2022} of $u,v$ in  $(G,\sigma)$, is 
 $$d_{\sigma}(u,v) =\min\{\ell_\sigma(P)\colon\,P \text{~is a~} uv\text{-path~in~} (G,\sigma)\}.$$ 
 
\begin{definition}[\cite{Spiro2022}]\label{signW}\normalfont
Let $(G,\sigma)$ be a signed graph. The Wiener index of $(G,\sigma)$, denoted by $W_\sigma(G)$,  is
$\sum d_{\sigma}(u,v),$
 where the summation is taken over all unordered pairs $\{u,v\}$ of distinct vertices in $G$.
\end{definition}
Let $(G,+)$ denote a signed graph where each edge is positive. It is easy to see that the Wiener index $W_{+}(G)$ coincides with the classic Wiener index $W(G)$ of the ordinary graph $G$, introduced by Harry Wiener \cite{wiener1947} in 1947. As the oldest topological index of a molecule, Wiener index has many applications in molecular chemistry, see the monograph \cite{trinajsti}.

A tree is a connected graph with no cycles. There are numerous studies of properties of the Wiener indices of trees, see the survey paper \cite{dobrynin2001}. Entringer, Jackson and Snyder~\cite{entringer1976} proved that, among all trees of any fixed order $n$, the path $P_n$ (resp. the star $K_{1,n}$) has the maximum (resp. minimum) Wiener index. Note that for any connected graph $G$ together with any signing $\sigma$, we have
$W_\sigma(G)\le W_+(G)=W(G)$. Consequently, the above result of Entringer et al. indicates that $W_\sigma(T)\le W(P_n)$ for any signed $n$-vertex tree $(T,\sigma)$. 

Let $\sigma$ be a signing of the path $P_n$. We call $\sigma$ (or $(P_n,\sigma)$) \emph{alternating} if any two adjacent edges have opposite signs. We usually use $\alpha$ to denote an alternating signing of a path. The following interesting conjecture was proposed recently by Spiro \cite{Spiro2022}.
\begin{conjecture}[\cite{Spiro2022}]\label{conj}
Among all signed trees of order $n$, the alternating path $(P_n,\alpha)$ has the minimum Wiener index.
\end{conjecture}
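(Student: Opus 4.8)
The plan is to establish the matching lower bound $W_\sigma(T)\ge W_\alpha(P_n)$ for every signed tree $(T,\sigma)$ on $n$ vertices, by first pinning down the value $W_\alpha(P_n)$ and then extracting a structural lower bound valid for all signings. First I would record a parity observation: in a tree the $uv$-path is unique, so $d_\sigma(u,v)=\bigl|\sum_{e\in E(P_{uv})}\sigma(e)\bigr|$ is the absolute value of a sum of $d_T(u,v)$ many terms each equal to $\pm1$, whence $d_\sigma(u,v)\equiv d_T(u,v)\pmod 2$. Since $T$ is bipartite with colour classes of sizes $a$ and $b=n-a$, the distance $d_T(u,v)$ is odd exactly when $u,v$ lie in different classes, so there are precisely $ab$ pairs at odd distance. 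Each such pair contributes a positive odd integer, while even-distance pairs contribute a nonnegative even integer, giving the baseline bound $W_\sigma(T)\ge ab$. Applied to the alternating path, where consecutive signs cancel so that $d_\alpha(u,v)=0$ for even $d_{P_n}(u,v)$ and $d_\alpha(u,v)=1$ for odd $d_{P_n}(u,v)$, this yields $W_\alpha(P_n)=\lfloor n^2/4\rfloor$, the balanced bipartition being forced by the path shape.

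Second, I would split the Wiener index as $W_\sigma(T)=O_\sigma(T)+E_\sigma(T)$, where $O_\sigma$ sums the odd (hence $\ge1$) contributions of the $ab$ odd-distance pairs and $E_\sigma$ sums the even contributions. The baseline already gives $O_\sigma(T)\ge ab$, so to reach $W_\sigma(T)\ge\lfloor n^2/4\rfloor$ it would suffice to show $E_\sigma(T)\ge\lfloor n^2/4\rfloor-ab$ at any minimising signing. The value of this reformulation is that $ab$ attains its maximum $\lfloor n^2/4\rfloor$ exactly for balanced trees such as the path, so for an unbalanced tree the deficit $\lfloor n^2/4\rfloor-ab$ must be recovered from even-distance pairs that cannot all be cancelled. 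I would attempt this lower bound on $E_\sigma$ by induction on $n$, deleting a leaf $\ell$ and controlling how the $n-1$ new pairs $\{\ell,v\}$ redistribute between the odd and even buckets, or alternatively by an exchange argument: starting from an arbitrary optimal $(T,\sigma)$, I would show that if $T$ is not a path, or if the signing is not alternating along a longest path, then a local move (reattaching a branch, or flipping a single sign to cancel an even path) does not increase $W_\sigma$, so an alternating path is reached without any increase.

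The hard part will be the lower bound on $E_\sigma(T)$ for trees that are neither the star nor the path. A signing enjoys genuine freedom to force many even-distance paths to sum to zero, and the constraint coupling these cancellations to the odd-distance contributions is global rather than edge-local, so it is not at all clear that the even-distance savings a branched tree enjoys are always outweighed by its more unbalanced bipartition; note in particular that switching, which trivialises cycle-balance, is useless here since it alters signed distances. Concretely, a tree whose bipartition is only mildly unbalanced but whose geometry admits a signing cancelling a large fraction of even pairs could in principle drive $O_\sigma+E_\sigma$ below $\lfloor n^2/4\rfloor$; ruling this out amounts to a tight combinatorial optimisation over the tree shape and the signing simultaneously. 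I therefore expect the decisive step to be a sharp inequality quantifying exactly how much cancellation a signing of a given tree can achieve, and I would concentrate the main effort on establishing that inequality and on verifying that the alternating path sits precisely at its extremal boundary.
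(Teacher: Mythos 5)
The statement you set out to prove is false, and the paper you are embedded in exists precisely to disprove it: Conjecture~\ref{conj} fails for every $n\ge 30$ (Theorem~\ref{main}). Your opening paragraph is sound --- the parity observation $d_\sigma(u,v)\equiv d_T(u,v)\pmod 2$, the baseline bound $W_\sigma(T)\ge ab$ for bipartition sizes $a,b$, and the value $W_\alpha(P_n)=\lfloor n/2\rfloor\lceil n/2\rceil$ all agree with Lemma~\ref{Wpath} and with the inequality \eqref{wt} the paper uses in Lemma~\ref{optlb}. But the decisive step you defer to, namely $E_\sigma(T)\ge\lfloor n^2/4\rfloor-ab$ at a minimising signing, is not merely hard: it is false, and falsified exactly by the scenario you yourself flag in your third paragraph. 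Take $T=T(3,4,4,4,4,4)$ with a nice signing $\sigma$ (Definition~\ref{nice}). Its bipartition puts the root and the $23$ leaves on one side and the $6$ branch vertices on the other, so $ab=6\cdot 24=144$, and your reduction would require $E_\sigma(T)\ge 225-144=81$. But Theorem~\ref{basic} gives even-pair contribution $2\bigl(\binom{3}{2}+5\binom{4}{2}\bigr)+2\binom{3}{2}+2\binom{3}{2}=78$, so $W_\sigma(T)=144+78=222<225=W_\alpha(P_{30})$. Every odd-distance pair contributes exactly $1$ (the baseline is tight), yet the signing cancels so many even-distance pairs that the unbalanced bipartition wins.

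The failure is not marginal. The paper's family $T(k,\ldots,k)$ with nice signings achieves $W_\sigma(T_n)=(2+o(1))n^{3/2}$ (Lemma~\ref{wtn}), an order of magnitude below $\lfloor n^2/4\rfloor$: here $ab\approx n^{3/2}$ is tiny because one side of the bipartition has only $\approx\sqrt{n}$ vertices, while the depth-two geometry lets a single signing zero out almost all even-distance pairs simultaneously (all paths of length $4$ have signed length $0$ under a nice signing, and the length-$2$ pairs through the root are nearly halved in sign). So your proposed exchange argument would also fail: local moves toward a path strictly increase the Wiener index on this family, not decrease it. The salvageable content of your approach is real but lands elsewhere --- your baseline $W_\sigma(T)\ge ab$ plus a convexity count of same-sign length-$2$ pairs is exactly how the paper proves the lower bound $W_{\hat\sigma}(\hat T_n)\ge(\sqrt 2+o(1))n^{3/2}$ for the true minimiser (Lemma~\ref{optlb}); it bounds the optimum from below at the $n^{3/2}$ scale, but it cannot be pushed to $\lfloor n^2/4\rfloor$, because the correct extremal answer is not the alternating path.
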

In this short note, we disprove Conjecture \ref{conj} by constructing infinite counterexamples.
\begin{theorem}\label{main}
	Conjecture \ref{conj} fails for every $n\ge 30$.
\end{theorem}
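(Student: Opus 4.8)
The plan is to produce, for every $n\ge 30$, an explicit signed tree $(T_n,\sigma_n)$ on $n$ vertices with $W_{\sigma_n}(T_n)<W_\alpha(P_n)$, so the first task is to pin down the target value $W_\alpha(P_n)$. Labelling the vertices $1,\dots,n$ along $P_n$ and using that consecutive edges have opposite signs, the signed length of the unique path between $i$ and $j$ is $0$ when $|i-j|$ is even and $1$ when $|i-j|$ is odd. Thus $d_\alpha(i,j)$ is just the parity of the graph distance, and $W_\alpha(P_n)$ counts the pairs at odd distance, giving $W_\alpha(P_n)=\lceil n/2\rceil\lfloor n/2\rfloor=\lfloor n^2/4\rfloor$. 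This is the quantity I must undercut.

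The key computational device is that in a tree any two vertices are joined by a unique path, so $d_\sigma(u,v)=\bigl|\sum_{e}\sigma(e)\bigr|$ taken over the edges of that path. Rooting $(T,\sigma)$ at a vertex $r$ and setting $s(v)=\sum_{e}\sigma(e)$ over the path from $r$ to $v$, one obtains the clean identity $d_\sigma(u,v)=|s(u)+s(v)-2s(w)|$, where $w$ is the lowest common ancestor of $u$ and $v$; the potential $s$ changes by exactly $\pm 1$ along each edge. This reduces $W_\sigma(T)$ to a finite sum over the potential values and the ancestor structure, which is what I would evaluate in closed form for the family $T_n$.

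It is worth recording why the path is hard to beat. If a signing keeps $s$ within two consecutive levels, say $\{0,1\}$, then every mismatched pair contributes exactly $1$ while matched pairs contribute $0$ or $2$; along a path (maximum degree $2$) the matched pairs never incur the penalty $2$, so $P_n$ is optimal among such signings. Any improvement must therefore (i) use a third level, i.e.\ genuine positive and negative branches meeting at a level-$0$ vertex, so that a pair with $s(u)=+1$, $s(v)=-1$ and $s(w)=0$ has distance $0$, converting would-be distance-$1$ pairs into distance-$0$ pairs; while (ii) controlling the distance-$2$ pairs that any branch vertex of degree $\ge 3$ unavoidably creates between equal-sign descendants.

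Accordingly I would take $T_n$ to be a carefully signed caterpillar or starlike tree: an alternating backbone carrying a controlled number of pendant branches signed with the opposite orientation, tuned so that the new distance-$0$ cross pairs strictly outnumber the distance-$2$ penalties for large $n$. I would then compute $W_{\sigma_n}(T_n)$ exactly through the potential formula, obtaining $\lfloor n^2/4\rfloor$ plus lower-order terms of mixed sign, and show the net correction is negative from some threshold onward, checking the residual small cases (and the boundary at $n=30$) by direct evaluation. The main obstacle is precisely the tension in (i)--(ii): since the alternating path is locally optimal and each branch saves and costs roughly the same order, the construction must be globally balanced so that the saving prevails only asymptotically, which is exactly why the disproof activates at $n=30$ rather than for all $n$. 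Finding the right branch pattern and signing, and then carrying out the exact Wiener-index bookkeeping to locate the precise threshold, is where the real work lies.
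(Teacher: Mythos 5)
Your computation of $W_\alpha(P_n)=\lfloor n/2\rfloor\lceil n/2\rceil$ is correct and agrees with the paper's Lemma 1, and the potential/lowest-common-ancestor identity $d_\sigma(u,v)=|s(u)+s(v)-2s(w)|$ is a sound bookkeeping device. But what you have written is a plan, not a proof: the counterexample itself is never exhibited. You say you ``would take $T_n$ to be a carefully signed caterpillar or starlike tree'' and ``would then compute $W_{\sigma_n}(T_n)$ exactly,'' and you acknowledge that finding the right branch pattern and signing is where the real work lies. That real work is the entire content of the theorem; without a concrete tree, an exact (or rigorously bounded) value of its signed Wiener index, and a verification covering every $n\ge 30$, nothing has been established.

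Moreover, the heuristic guiding your search points in a misleading direction. You expect the winning tree to have Wiener index $\lfloor n^2/4\rfloor$ plus lower-order corrections of mixed sign, i.e., to beat the path only marginally, with an alternating backbone plus a few tuned pendant branches. The construction that actually works is a depth-two spider $T(a_1,\ldots,a_k)$: a root joined to $k\approx\sqrt{n}$ branch vertices, each carrying about $\sqrt{n}$ leaves, with all leaf edges at a branch vertex signed oppositely to its root edge and the $k$ root edges balanced in sign. Under such a signing almost every pair of vertices is at signed distance $0$ (every root-to-leaf path of length $2$ and every leaf-to-leaf path of length $4$ has net sign $0$), and the total comes out to $\Theta(n^{3/2})$ --- an entirely different order of magnitude from $n^2/4$. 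The threshold $n=30$ arises because the constants in this $n^{3/2}$ family first win there (e.g., $T(3,4,4,4,4,4)$ gives $222<225=W_\alpha(P_{30})$), not because a near-path modification becomes favorable asymptotically; it is far from clear that your proposed backbone-plus-pendants tree ever beats the alternating path at all. To complete the argument you would still need a concrete family realizing every order $n\ge 30$ (the paper uses $T(a_1,\dots,a_k)$ with all $a_i\in\{k-1,k\}$ or all in $\{k,k+1\}$, covering every order in $[k^2+1,(k+1)^2]$), a closed form for its Wiener index, an asymptotic comparison valid for large $k$, and a finite check of the remaining small cases.
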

The proof of Theorem \ref{main} is given at the end of the next section.
\section{An infinite family of counterexamples}
Let $k\ge 0$ and $a_1,a_2,\ldots,a_k$ be $k$ nonnegative integers. Let $T(a_1,a_2,\ldots,a_k)$ denote a rooted tree with $1+k+\sum_{i=1}^k a_i$ vertices constructing by the following two rules:\\
(\rmnum{1}) The root vertex has $k$ neighbors $u_1,u_2,\ldots,u_k$; such $k$ vertices will be called \emph{branch} vertices. \\
(\rmnum {2}) For each $i\in\{1,2,\ldots,k\}$, the branch vertex $u_i$ has $a_i$ neighbors other than the root vertex;  such $a_i$ neighbors will be called \emph{leaf} vertices.

\begin{definition}\normalfont\label{nice}
	Let $\sigma$ be a signing of a rooted tree $T(a_1,a_2,\ldots,a_k)$. We call $\sigma$ \emph{nice} if it satisfies the following two conditions:\\
(\rmnum{1})  Among $k$ edges incident to the root vertex, the numbers of positive edges and negative edges differ by at most one.\\
(\rmnum{2})  For each branch vertex $u$, all edges connecting $u$ and leaf vertices have the same sign which is opposite to the sign of the edge connecting $u$ and the root vertex.
	\end{definition}
Figure 1 illustrates a nice signing for the rooted tree $T(3,4,4,4,4,4)$, where we use dashed (resp. solid) lines to represent negative (resp. positive) edges.
\begin{figure}[htbp]
	\centering
	\includegraphics[height=4cm]{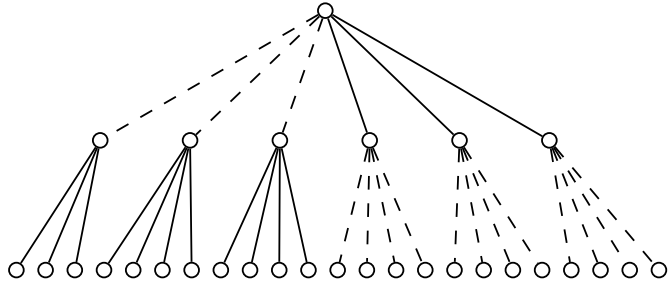}
	\caption{$T(3,4,4,4,4,4)$ with a nice signing.}
	\label{ex16}
\end{figure}
\begin{theorem}\label{basic}
	If $\sigma$ is a nice then $$W_\sigma(T(a_1,a_2,\ldots,a_k))=2\sum_{i=1}^k\binom{a_i}{2} +2\binom{\lfloor\frac{k}{2}\rfloor}2+2\binom{\lceil\frac{k}{2}\rceil}2+k\left(1+\sum_{i=1}^{k} a_i\right).$$
\end{theorem}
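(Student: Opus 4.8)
The plan is to exploit the fact that $T(a_1,\ldots,a_k)$ is a tree, so between any two vertices there is a \emph{unique} path, and hence the minimum in the definition of $d_\sigma$ is vacuous: for any pair $\{x,y\}$ we simply have $d_\sigma(x,y)=|\sum_{e}\sigma(e)|$ summed over the edges of that unique path. Writing $r$ for the root, $u_1,\ldots,u_k$ for the branch vertices, and $S=\sum_{i=1}^k a_i$ for the total number of leaves, I would partition the $\binom{n}{2}$ unordered pairs into seven natural classes according to the types of their endpoints (root–branch, root–leaf, branch–own-leaf, two leaves of the same branch, branch–branch, branch–other-branch's-leaf, two leaves of different branches) and evaluate $d_\sigma$ on each class separately, then sum.

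The heart of the computation is that condition (\rmnum{2}) of a nice signing forces cancellations along the longer paths. Concretely: a root–branch edge contributes distance $1$ and a branch–own-leaf edge contributes distance $1$; two leaves of the same branch share the same sign on their pendant edges, giving distance $2$. For a root–leaf pair, the two edges on the path have opposite signs by (\rmnum{2}), so the distance is $0$; for two leaves in \emph{different} branches the path $w\,u_i\,r\,u_j\,w'$ has both pendant-edge signs cancelling their respective root-edge signs, again yielding distance $0$. The slightly more delicate case is a branch $u_i$ paired with a leaf $w'$ of $u_j$ ($i\neq j$): along $u_i\,r\,u_j\,w'$ the sign $\sigma(u_j w')=-\sigma(ru_j)$ cancels $\sigma(ru_j)$, leaving $|\sigma(ru_i)|=1$, so this class contributes distance $1$ uniformly. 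I would record the number of pairs in each class, the only nonobvious count being the $(k-1)S$ branch-to-other-leaf pairs.

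For the branch–branch pairs the path is $u_i\,r\,u_j$, whose distance is $2$ when $\sigma(ru_i)=\sigma(ru_j)$ and $0$ otherwise; here condition (\rmnum{1}) enters, since the root edges split into $\lceil k/2\rceil$ of one sign and $\lfloor k/2\rfloor$ of the other, so the number of same-sign (distance-$2$) pairs is exactly $\binom{\lfloor k/2\rfloor}{2}+\binom{\lceil k/2\rceil}{2}$. Assembling everything, the same-branch leaf pairs give $2\sum_i\binom{a_i}{2}$, the branch–branch pairs give $2\binom{\lfloor k/2\rfloor}{2}+2\binom{\lceil k/2\rceil}{2}$, and the distance-$1$ contributions ($k$ root–branch pairs, $S$ branch–own-leaf pairs, and $(k-1)S$ branch–other-leaf pairs) total $k+S+(k-1)S=k(1+S)$, matching the claimed formula. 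I expect the main obstacle to be purely organizational: verifying the telescoping cancellations in the two cross-branch classes and confirming that condition (\rmnum{1}) translates precisely into the two floor/ceiling binomial terms, rather than any analytic difficulty.
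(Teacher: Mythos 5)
Your proposal is correct and follows essentially the same approach as the paper's proof: a direct case analysis of all vertex pairs in the tree (where paths are unique), using condition (\rmnum{2}) to force cancellation on the length-$2$ root--leaf, length-$3$, and length-$4$ paths and condition (\rmnum{1}) to count the same-sign branch--branch pairs. The only difference is organizational --- you partition pairs by endpoint type into seven classes, while the paper groups paths by their number of edges --- and the resulting computations are identical.
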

\begin{proof}
	Write $T=T(a_1,a_2,\ldots,a_k)$ and let $P$ be any path in $(T,\sigma)$. Clearly, $P$ contains at most four edges. Since $\sigma$ is nice, one easily sees from Definition \ref{nice}(\rmnum{2}) that any path in $(T,\sigma)$ with 4 edges have exactly 2 positive  edges and hence satisfies $\ell_\sigma(P)=0$. Similarly, if $P$ has exactly 2 edges and  $\ell_\sigma(P)>0$ then the two endvertices of $P$ must be either two leaf vertices adjacent to a common branch vertex, or two branch vertices adjacent to the root vertex by two edges sharing the same sign. Note that the numbers of positive edges and negative edges are $\lfloor\frac{k}{2}\rfloor$ and $\lceil\frac{k}{2}\rceil$ (or in reverse order) by Definition \ref{nice}(\rmnum{1}). Thus, the  contribution of such paths to $W_\sigma(T)$ is
	$$2\sum_{i=1}^k\binom{a_i}{2} +2\binom{\lfloor\frac{k}{2}\rfloor}2+2\binom{\lceil\frac{k}{2}\rceil}2.$$
Furthermore, noting that each path $P$ with exactly one or three edges satisfies  $\ell_\sigma(P)=1$ and there exists such a path between branch vertices and the remaining vertices, we see that the contribution of path with one or three edges is exactly 
	$$k\left(1+\sum_{i=1}^{k} a_i\right).$$
Adding the above two expressions completes the proof. 
\end{proof}
\begin{lemma}\label{Wpath}
	Let $\alpha$ be an alternating signing of $P_n$. Then $W_\alpha(P_n)=\lfloor\frac{n}{2}\rfloor\lceil\frac{n}{2}\rceil$.
\end{lemma}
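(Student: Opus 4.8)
The plan is to exploit the fact that $P_n$ is a tree, so between any two vertices there is a \emph{unique} path. Consequently the minimization in the definition of $d_\sigma$ is vacuous: $d_\alpha(u,v)$ is simply the absolute value of the signed edge-sum along the one and only $uv$-path. This removes the only conceptually delicate part of the signed-distance definition, and reduces the lemma to a parity count.

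First I would fix the natural labelling $v_1,v_2,\ldots,v_n$ of the vertices along the path, with edges $e_i=v_iv_{i+1}$ for $1\le i\le n-1$. Since $\alpha$ is alternating, the signs $\alpha(e_i)$ strictly alternate, so $\alpha(e_i)=\varepsilon(-1)^{i}$ for some fixed $\varepsilon\in\{+1,-1\}$. For $i<j$ the unique $v_iv_j$-path uses exactly the edges $e_i,e_{i+1},\ldots,e_{j-1}$, so $d_\alpha(v_i,v_j)=\bigl|\sum_{t=i}^{j-1}\alpha(e_t)\bigr|$. The key observation is that the summands alternate in sign and therefore cancel in consecutive pairs: if $j-i$ is even the sum is $0$, while if $j-i$ is odd exactly one term survives and the absolute value is $1$. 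Hence $d_\alpha(v_i,v_j)=1$ when the indices $i$ and $j$ have opposite parity, and $d_\alpha(v_i,v_j)=0$ otherwise.

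Finally I would count. By Definition~\ref{signW}, $W_\alpha(P_n)$ equals the number of unordered pairs $\{v_i,v_j\}$ whose index parities differ, since each such pair contributes $1$ and every other pair contributes $0$. Among the indices $1,2,\ldots,n$ there are $\lceil n/2\rceil$ odd values and $\lfloor n/2\rfloor$ even values, so the number of odd--even pairs is the product $\lfloor n/2\rfloor\lceil n/2\rceil$, which gives the claimed formula. There is no serious obstacle in this argument; the only points requiring care are the pairwise cancellation of the alternating signs and the bookkeeping of odd versus even vertex indices.
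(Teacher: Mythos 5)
Your argument is correct and is essentially the paper's proof: the paper phrases the two parity classes of vertex indices as the bipartition $(U,V)$ of $P_n$ as a bipartite graph and directly asserts that $d_\alpha(u,v)$ is $0$ within a part and $1$ across parts, which is exactly the cancellation computation you carry out explicitly. Your write-up simply supplies the detail behind the paper's ``it is easy to see,'' and the final count $\lfloor n/2\rfloor\lceil n/2\rceil$ is identical.
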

\begin{proof}
	Let $(U,V)$ be the bipartition of $P_n$ as a bipartite graph, where we assume $|U|\le |V|$. Then $|U|=\lfloor\frac{n}{2}\rfloor$ and $|V|=\lceil\frac{n}{2}\rceil$. Let $u,v$ be any two vertices of $P_n$.
	It is easy to see that $d_\alpha(u,v)=0$ if $u$ and $v$ are in the same part, and $d_\alpha(u,v)=1$ otherwise. Thus, $W_\alpha(P_n)=|U||V|=\lfloor\frac{n}{2}\rfloor\lceil\frac{n}{2}\rceil$, as desired.
\end{proof}
Noting that $T(3,4,4,4,4,4)$ has exactly 30 vertices, the following proposition gives a counterexample to Conjecture \ref{conj}.
\begin{proposition}\label{Wpath}
	Let $\alpha$ be an alternating signing of $P_{30}$ and $\sigma$ be a nice signing of $T=T(3,4,4,4,4,4)$. Then $W_\sigma(T)<W_\alpha( P_{30}).$
\end{proposition}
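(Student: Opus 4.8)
The plan is to evaluate both sides using the closed-form expressions already in hand and then compare the two resulting integers directly. Before doing so, I would record that $T=T(3,4,4,4,4,4)$ has $1+k+\sum_{i=1}^k a_i = 1+6+(3+4\cdot 5)=30$ vertices, so it lies in the same family of $30$-vertex trees as $P_{30}$ and the comparison is meaningful; Figure \ref{ex16} already exhibits a nice signing of exactly this tree, so $W_\sigma(T)$ is well-defined and its value is given by Theorem \ref{basic} independently of which nice signing is chosen (one may split the six root-edges into three positive and three negative and sign each branch's leaf-edges oppositely to its root-edge).

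Next I would apply Theorem \ref{basic} with $k=6$, $a_1=3$, and $a_2=a_3=a_4=a_5=a_6=4$. Here $\lfloor k/2\rfloor=\lceil k/2\rceil=3$ and $\sum_{i=1}^6 a_i=23$, so the three summands evaluate to $2\left(\binom{3}{2}+5\binom{4}{2}\right)=66$, then $2\binom{3}{2}+2\binom{3}{2}=12$, and finally $6(1+23)=144$. Adding these gives $W_\sigma(T)=66+12+144=222$.

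Then I would apply Lemma \ref{Wpath} with $n=30$ to obtain $W_\alpha(P_{30})=\lfloor 30/2\rfloor\lceil 30/2\rceil=15\cdot 15=225$. Comparing the two values yields the desired strict inequality $W_\sigma(T)=222<225=W_\alpha(P_{30})$.

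There is no genuine obstacle remaining at this stage: all the substantive work has already been carried out in establishing Theorem \ref{basic} and Lemma \ref{Wpath}, and what is left is a short arithmetic verification. The only points needing care are confirming that the chosen parameters really produce a $30$-vertex tree, so that $P_{30}$ is the correct object to compare against, and that the slack $225-222=3$ is genuinely positive rather than an artifact of a miscounted binomial coefficient; re-checking each of the three contributions in Theorem \ref{basic} separately guards against such an error.
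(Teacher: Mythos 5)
Your proposal is correct and follows exactly the paper's route: apply Theorem \ref{basic} with $k=6$, $(a_i)=(3,4,4,4,4,4)$ to get $W_\sigma(T)=66+12+144=222$, apply Lemma \ref{Wpath} to get $W_\alpha(P_{30})=225$, and compare. The paper's own proof is just this computation stated more tersely, so your added arithmetic checks are a harmless elaboration of the same argument.
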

\begin{proof}
	Using Theorem \ref{basic} and Lemma \ref{Wpath}, we find that $W_\sigma(T)=222$ while $W_\alpha( P_{30})=225$. Thus $W_\sigma(T)<W_\alpha( P_{30})$, as desired.
\end{proof}
We shall show that for any $n\ge 30$, there exists a  counterexample to Conjecture \ref{conj}. 
\begin{definition}
$$\mathcal{T}_k=\bigcup_{0\le s\le k}\left\{T(\underbrace{k-1,\ldots,k-1}_{k-s},\underbrace{k,\ldots,k}_s),T(\underbrace{k,\ldots,k}_{k-s},\underbrace{k+1,\ldots,k+1}_s)\right\}.$$
\end{definition}
Note that $\mathcal{T}_k$ contains exactly $2k+1$ rooted trees of consecutive orders from $k^2+1$ to $(k+1)^2$, see Figure \ref{Tk} for the five rooted trees in $\mathcal{T}_2$.
\begin{figure}[htbp]
	\centering
	\includegraphics[height=5cm]{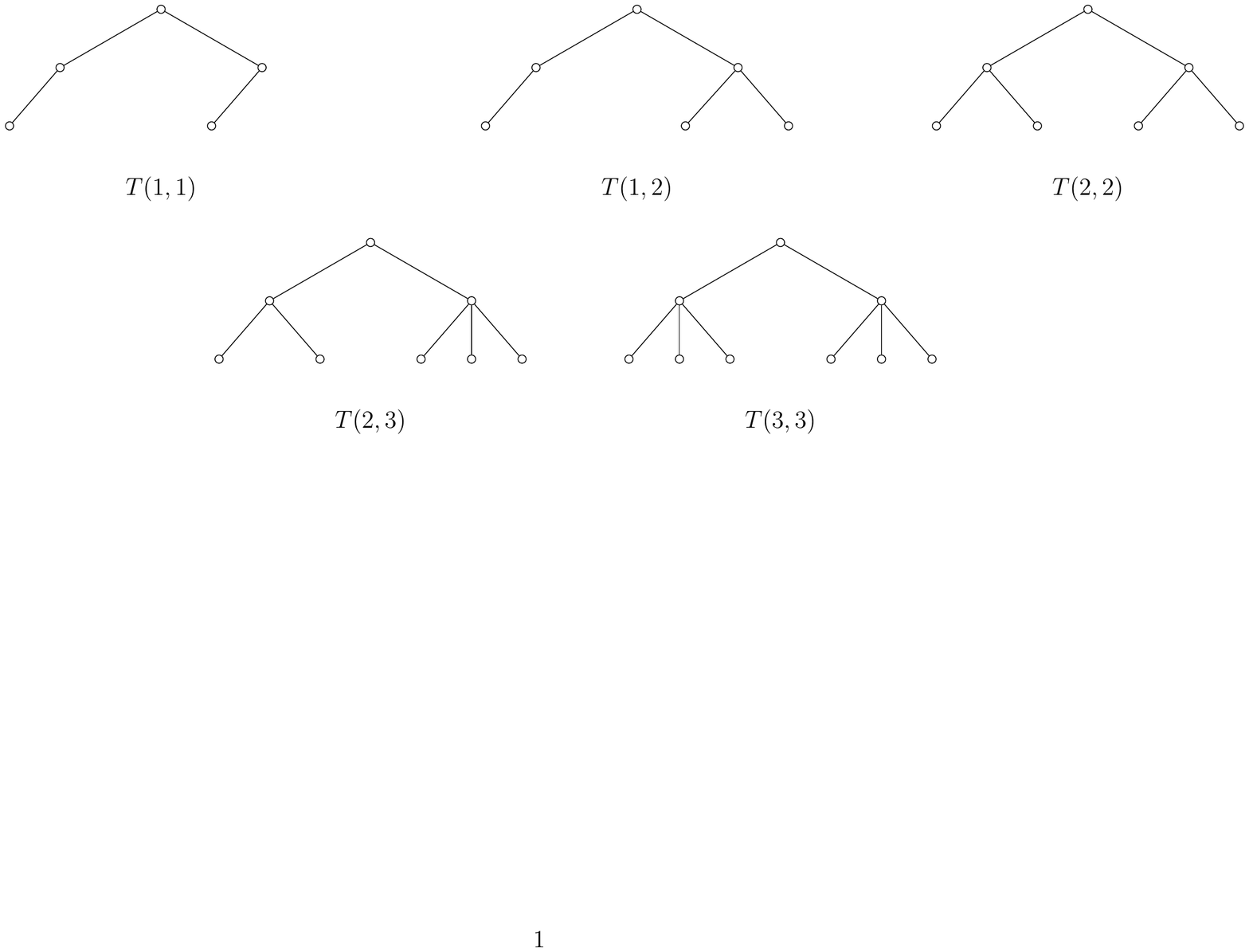}
	\caption{The family $\mathcal{T}_2$.}
	\label{Tk}
\end{figure}

\begin{lemma}\label{atleast8}
Let $k\ge 10$ and $T$ be any rooted tree in $\mathcal{T}_k$. Let $n=|V(T)|$. Then $W_\sigma (T)<W_\alpha(P_n)$ where $\sigma$ is nice while $\alpha$ is alternating.
\end{lemma}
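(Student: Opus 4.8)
The plan is to reduce the lemma to a single polynomial inequality in $k$ using the two closed formulas already in hand, rather than evaluating anything exactly. First I would record the common structure of the family. Every tree in $\mathcal{T}_k$ has exactly $k$ branch vertices, since $(k-s)+s=k$ in both prescribed types, and a direct count of $1+k+\sum_i a_i$ shows that as $T$ ranges over $\mathcal{T}_k$ its order $n=|V(T)|$ runs through the consecutive integers $k^2+1,k^2+2,\ldots,(k+1)^2$. In particular, for every such $T$ each branch degree satisfies $a_i\in\{k-1,k,k+1\}$ and the order obeys $k^2+1\le n\le(k+1)^2$; these two facts are all I will need.

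Next I would feed this into Theorem~\ref{basic}, but bound its three summands from above uniformly over $\mathcal{T}_k$ instead of computing them. Since $a_i\le k+1$ we get $2\sum_{i=1}^k\binom{a_i}{2}\le k^2(k+1)=k^3+k^2$; a short parity check (even and odd $k$ separately) gives $2\binom{\lfloor k/2\rfloor}{2}+2\binom{\lceil k/2\rceil}{2}<\tfrac{k^2}{2}$ for all $k$; and writing $1+\sum_i a_i=n-k\le (k+1)^2-k$ yields $k\bigl(1+\sum_i a_i\bigr)\le k^3+k^2+k$. Adding these produces the clean bound $W_\sigma(T)\le 2k^3+\tfrac52 k^2+k$, valid for every $T\in\mathcal{T}_k$ and every nice $\sigma$.

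For the comparison side, the path formula $W_\alpha(P_n)=\lfloor n/2\rfloor\lceil n/2\rceil=\lfloor n^2/4\rfloor$ (Lemma~\ref{Wpath}) gives $W_\alpha(P_n)\ge\tfrac{n^2-1}{4}$, and since $n\ge k^2+1$ this is at least $\tfrac{(k^2+1)^2-1}{4}=\tfrac{k^4}{4}+\tfrac{k^2}{2}$. Combining the two estimates, the desired inequality $W_\sigma(T)<W_\alpha(P_n)$ follows once $\tfrac{k^4}{4}+\tfrac{k^2}{2}>2k^3+\tfrac52 k^2+k$, i.e. once $k^3-8k^2-8k-4>0$. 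The cubic on the left has positive derivative $3k^2-16k-8$ at $k=10$ and equals $116$ there, so it is increasing and positive for all $k\ge 10$, which closes the argument.

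The only real subtlety is the decision to bound rather than compute. An exact evaluation of Theorem~\ref{basic} on the two subfamilies of $\mathcal{T}_k$, compared against $\lfloor n^2/4\rfloor$, would be tight enough to push the threshold below $k=10$, but it would force one to track the floor jointly with the parities of $k$ and of $n$ across several cases. The crude bounds above deliberately discard this lower-order information, yet they still separate a leading term of order $k^4/4$ (from the alternating path) from one of order $2k^3$ (from the tree). That quadratic-versus-cubic growth gap in $n\approx k^2$ is exactly what allows a single, case-free inequality to settle all of $\mathcal{T}_k$ at once for $k\ge 10$.
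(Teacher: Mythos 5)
Your proposal is correct and follows essentially the same route as the paper: both arguments upper-bound $W_\sigma(T)$ by roughly $2k^3+\tfrac{5}{2}k^2$ using the worst case $a_i\le k+1$, lower-bound $W_\alpha(P_n)$ by roughly $\tfrac{1}{4}k^4$ using $n\ge k^2+1$, and reduce the claim to a single polynomial inequality valid for $k\ge 10$. The paper phrases the reduction via monotonicity in $n$ (comparing the largest tree $T_M$ against the shortest path $P_m$) and verifies a ratio bound rather than a difference, but these are only cosmetic variations on your argument.
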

\begin{proof}
	Write $m=k^2+1$ and $M=(k+1)^2$. By Theorem \ref{basic} and Lemma \ref{Wpath}, it is not difficult to see that both  $W_\sigma (T)$ and $W_\alpha(P_n)$ are increasing as a function of $n=|V(T)|$. Thus we are done if we can show that $W_\sigma (T_M)<W_\alpha(P_m)$ where $T_M=T(\underbrace{k+1,\ldots,k+1}_k)$.
	
By Theorem \ref{basic} we have
	 \begin{eqnarray}\label{removebeta}
W_\sigma(T_M)&=&2k\binom{k+1}{2} +2\binom{\lfloor\frac{k}{2}\rfloor}2+2\binom{\lceil\frac{k}{2}\rceil}2+k(1+k(k+1))\label{wtk}\\
	&< &2k\binom{k+1}{2}+2\binom{\frac{k}{2}}{2}+2\binom{\frac{k+1}{2}}{2}+k(1+k(k+1))\nonumber\\
	&=&2k^3+\frac{5}{2}k^2+\frac{1}{2}k-\frac{1}{4}.\nonumber
	\end{eqnarray}
On the other hand, by Lemma \ref{Wpath}, we have 
$$W_\alpha(P_m)=\left\lfloor{\frac{m}{2}}\right\rfloor\left\lceil\frac{m}{2}\right\rceil=\left\lfloor{\frac{k^2+1}{2}}\right\rfloor\left\lceil\frac{k^2+1}{2}\right\rceil>\frac{1}{4}k^4.$$
It follows that
$$\frac{W_\sigma(T_M)}{W_\alpha(P_m)}<\frac{8}{k}+\frac{10}{k^2}+\frac{2}{k^3}-\frac{1}{k^4}<\frac{8}{k}+\frac{10}{k^2}+\frac{2}{k^3}\le \frac{8}{10}+\frac{10}{10^2}+\frac{2}{10^3}<1.$$
Thus $W_\sigma (T_M)<W_\alpha(P_m)$, as desired.	The proof is complete.
\end{proof}
\begin{proof}[Proof of Theorem \ref{main}] Let $\mathcal{T}=\cup_{k=0}^{\infty}\mathcal{T}_k$.  It is clear that $\mathcal{T}$ contains exactly one $n$-vertex (rooted) tree for every positive integer $n$. We use $T_n$ to denote the unique $n$-vertex tree in the family $\mathcal{T}$. Let $\sigma$ be a nice signing of $T_n$ and $\alpha$ be an alternating signing of $P_n$. By Lemma \ref{atleast8}, we see that $W_\sigma(T_n)<W_\alpha(P_n)$ whenever $n\ge 10^2+1$. On the other hand, we know from Proposition  \ref{Wpath} that there does exist a $30$-vertex tree $T$ (with a nice signing $\sigma$) such that $W_\sigma(T)<W_\alpha(P_{30})$. It remains to consider the case that $n\in\{31,32,\ldots,100\}$.
	
	We claim that $W_\sigma(T_n)<W_\alpha(P_n)$ for each $n\in\{31,32,\ldots,100\}$. This can be checked directly using Theorem \ref{basic} and Lemma \ref{Wpath}. Take $n=31$ as an example. As $31\in [5^2+1,(5+1)^2]$, we find that $T_{31}\in \mathcal{T}_5$ and moreover $T_{31}=T(5,5,5,5,5)$. Using Theorem \ref{basic} for $T_{31}$, we obtain that  $W_\sigma(T_{31})=238$. By Lemma \ref{Wpath}, we have $W_\alpha(P_{31})=\lfloor\frac{31}{2}\rfloor\lceil\frac{31}{2}\rceil=240.$ Thus $W_\sigma(T_n)<W_\alpha(P_n)$ for $n=31$. The proof is complete.	
\end{proof}
We remark that the counterexamples constructed in this note also disprove another conjecture of Spiro. For a graph $G$, the \emph{minimal signed Wiener index} of $G$, denoted by $W_*(G)$, is the minimum of $W_\sigma(G)$ for all possible signings  $\sigma$. Spiro \cite{Spiro2022} conjectured that $W_*(T)\ge W_*(P_n)$ for any $n$-vertex tree $T$. Let $n\ge 30$ and $T_n$ be the tree used in the proof of Theorem \ref{main}. Clearly,  $W_*(T_n)\le W_\sigma(T_n)$, where $\sigma$ is a nice signing of $T_n$. On the other hand, it is easy to see that  $W_*(P_n)= W_\alpha(P_n)$. Since $W_\sigma(T_n)<W_\alpha(P_n)$, we obtain $W_*(T_n)<W_*(P_n)$,  disproving this conjecture.  
\section{Asymptotic property} 
It is still unknown which signed trees have the minimum Wiener index among all signed trees of a fixed order $n$. 	We use $(\hat{T_n},\hat{\sigma})$ to denote an $n$-vertex signed tree whose Wiener index is minimum among all signed trees of order $n$. And let $(T_n,\sigma)$ be the $n$-vertex tree in $\cup_{k=0}^{\infty}\mathcal{T}_k$ with a nice signing $\sigma$.   One referee kindly points out that $(T_n,\sigma)$ is optimal up to a constant factor. Precisely, 
$$\limsup\limits_{n\rightarrow\infty} \frac{W_\sigma(T_n)}{W_{\hat\sigma}(\hat{T}_n)}\le C,$$  
for some constant $C$.
 \begin{lemma}\label{wtn}
 $W_\sigma(T_n)=(2+o(1))n^\frac{3}{2}$.
\end{lemma}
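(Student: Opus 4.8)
The plan is to substitute the tree $T_n\in\mathcal T_k$ directly into the closed-form expression of Theorem \ref{basic} and extract the leading asymptotics. The essential structural inputs are (a) every tree in $\mathcal T_k$ has exactly $k$ branch vertices with $a_i\in\{k-1,k,k+1\}$ for all $i$, so that $a_i=k+O(1)$ uniformly, and (b) the membership $T_n\in\mathcal T_k$ is equivalent to $k^2+1\le n\le(k+1)^2$. From (b) one obtains $\sqrt n-1\le k<\sqrt n$, hence $k=(1+o(1))\sqrt n$ and therefore $k^3=(1+o(1))n^{3/2}$. So it suffices to prove $W_\sigma(T_n)=(2+o(1))k^3$.

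First I would handle the two terms of Theorem \ref{basic} that are cubic in $k$. Using $a_i=k+O(1)$ we have $\binom{a_i}{2}=\tfrac12 k^2(1+o(1))$ for each $i$; summing over the $k$ branches and doubling gives $2\sum_{i=1}^k\binom{a_i}{2}=(1+o(1))k^3$. Similarly $\sum_{i=1}^k a_i=k^2+O(k)=(1+o(1))k^2$, so the last term satisfies $k\bigl(1+\sum_{i=1}^k a_i\bigr)=(1+o(1))k^3$. These are the only two contributions of order $k^3$, and it is precisely their coincidence that produces the leading constant $2$: one factor of $k^3$ arises from the same-branch leaf pairs and the other from the paths with an odd number of edges emanating from the branch vertices.

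It then remains to bound the two middle terms. Both $2\binom{\lfloor k/2\rfloor}{2}$ and $2\binom{\lceil k/2\rceil}{2}$ are $O(k^2)$, and the additive constant inside the last factor contributes only $O(k)$; all of these are $o(k^3)$. Summing everything,
$$W_\sigma(T_n)=(1+o(1))k^3+(1+o(1))k^3+O(k^2)=(2+o(1))k^3=(2+o(1))n^{3/2},$$
which is the claim. I do not expect a genuine obstacle: the computation is a routine asymptotic expansion, and the only step demanding care is (b), namely checking that the two-sided bound $k^2+1\le n\le(k+1)^2$ ties $k$ to $\sqrt n$ tightly enough that passing from $k^3$ to $n^{3/2}$ preserves the leading coefficient and not merely the growth rate.
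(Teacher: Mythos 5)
Your proposal is correct and follows essentially the same route as the paper: both substitute into the closed form of Theorem \ref{basic}, observe that the leaf-pair term and the odd-length-path term each contribute $(1+o(1))k^3$ while the remaining terms are $O(k^2)$, and convert via $k=(1+o(1))\sqrt{n}$. The only (harmless) difference is that the paper squeezes $W_\sigma(T_n)$ between the smallest and largest members of $\mathcal{T}_k$ using monotonicity, whereas you expand the formula for the general member directly via $a_i=k+O(1)$, which sidesteps the monotonicity step entirely.
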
 
\begin{proof}
	Let $k=\lfloor\sqrt{n-1}\rfloor$, $m=k^2+1$ and $M=(k+1)^2$. Then we have $m\le n\le M$. Note that $T_m=T(\underbrace{k,\ldots,k}_k)$ and $T_M=T(\underbrace{k+1,\ldots,k+1}_k)$. Using Theorem \ref{basic}, we have
	 \begin{equation}
	W_\sigma(T_m)=2k\binom{k}{2} +2\binom{\lfloor\frac{k}{2}\rfloor}{2}+2\binom{\lceil\frac{k}{2}\rceil}{2}+k(1+k^2)=(2+o(1))k^3
	\end{equation}
and 
	 \begin{equation}
W_\sigma(T_M)=2k\binom{k+1}{2} +2\binom{\lfloor\frac{k}{2}\rfloor}{2}+2\binom{\lceil\frac{k}{2}\rceil}{2}+k(1+k(k+1))=(2+o(1))k^3.
\end{equation}
Noting that $k^3\sim n^{\frac{3}{2}}$ and $W_\sigma(T_m)\le W_\sigma(T_n)\le W_\sigma(T_M)$, we have $W_{\sigma}({T}_n)=(2+o(1))n^\frac{3}{2}$ by Squeeze Theorem. 
\end{proof}
The following lower bound  is due to Sam Spiro.
\begin{lemma}\label{optlb}
	$W_{\hat\sigma}(\hat{T}_n)\ge(\sqrt{2}+o(1))n^{\frac{3}{2}}$.
\end{lemma}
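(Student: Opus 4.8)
The plan is to prove the bound for an \emph{arbitrary} signed tree $(T,\sigma)$ on $n$ vertices and then specialize to $(\hat T_n,\hat\sigma)$. Since $T$ is bipartite, I would fix its bipartition $(A,B)$ with $b:=|B|\le|A|=:a$, so that $a+b=n$ and $b\le n/2$; because $T$ is a tree, every edge joins $A$ to $B$, whence $\sum_{c\in B}\deg(c)=n-1$. I would also use the elementary fact that, in a tree, the unique $uv$-path gives $d_\sigma(u,v)=|\sum_{e}\sigma(e)|\equiv d(u,v)\pmod 2$, since each $\sigma(e)$ is odd.

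The core idea is to bound $W_\sigma(T)$ from below by the contribution of two \emph{disjoint} families of pairs, being careful to weight the distance-$2$ pairs by their true distance $2$. First, by the parity remark every pair at odd ordinary distance has $d_\sigma\ge1$, and there are exactly $ab$ such pairs. Second, for each $c\in B$ all neighbours of $c$ lie in $A$, and any two of them joined to $c$ by edges of the \emph{same} sign form a distance-$2$ pair with $d_\sigma=2$; writing $p_c,q_c$ for the numbers of positive and negative edges at $c$, there are $\binom{p_c}{2}+\binom{q_c}{2}$ such pairs, each with a \emph{unique} centre $c\in B$. As the two families have opposite parity they are disjoint, so
$$W_\sigma(T)\ \ge\ ab+2\sum_{c\in B}\left(\binom{p_c}{2}+\binom{q_c}{2}\right).$$

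Next I would eliminate the dependence on $\sigma$. From $p_c^2+q_c^2\ge\tfrac12(p_c+q_c)^2$ one gets $\binom{p_c}{2}+\binom{q_c}{2}\ge\tfrac14\deg(c)^2-\tfrac12\deg(c)$ for \emph{every} signing, and Cauchy--Schwarz gives $\sum_{c\in B}\deg(c)^2\ge(n-1)^2/b$. Combining these yields
$$W_\sigma(T)\ \ge\ (n-b)b+\frac{(n-1)^2}{2b}-(n-1),$$
and it remains to minimise the right-hand side over $b\in[1,n/2]$. For $b$ up to about $n^{3/5}$ the quantity $b^2$ hidden in $(n-b)b$ is $o(n^{3/2})$, so AM--GM applied to $nb$ and $\tfrac{(n-1)^2}{2b}$ gives $nb+\tfrac{(n-1)^2}{2b}\ge\sqrt2\,(n-1)\sqrt n=(\sqrt2-o(1))n^{3/2}$; for larger $b$ one has $(n-b)b\ge\tfrac12 n^{8/5}$, which already dominates $n^{3/2}$. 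In either regime $W_\sigma(T)\ge(\sqrt2-o(1))n^{3/2}$, and setting $(T,\sigma)=(\hat T_n,\hat\sigma)$ gives the lemma.

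The main obstacle, and the reason the constant is exactly $\sqrt2$, is the bookkeeping in the displayed inequality: one must weight the same-sign distance-$2$ pairs by $2$ rather than merely counting positive pairs. With weight $1$ the balance would be between $ab$ and $\tfrac{(n-1)^2}{4b}$, and AM--GM would yield only the weaker constant $1$; the factor $2$ is precisely what upgrades the balance to $\sqrt2$. A secondary point requiring care is that the distance-$2$ pairs must be centred in the \emph{smaller} part $B$, so that the identity $\sum_{c\in B}\deg(c)=n-1$ can feed Cauchy--Schwarz, and that these even-distance pairs are genuinely disjoint from the odd-distance pairs counted by $ab$.
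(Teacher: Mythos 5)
Your proposal is correct and follows essentially the same route as the paper: the same lower bound by odd-distance pairs plus the doubled count of same-sign distance-2 pairs centred in the smaller part, followed by a convexity step (your Cauchy--Schwarz bound is equivalent to the paper's Jensen step) and AM--GM with a case split on the size of the smaller part. The only differences are cosmetic, such as the threshold $n^{3/5}$ in place of the paper's $2\sqrt{2n}$.
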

\begin{proof}
	Let $U,V$ be the bipartition of $\hat{T}_n$ with $|U|\le |V|$. Label  vertices  in $U$ as $u_1,u_2,\ldots, u_k$, where $k=|U|$. Let $d_i^+$ (resp. $d_i^-$) denote the number of positive (resp. negative) edges incident with $u_i$ for each $i$. It is not too difficult to show that
	\begin{equation}\label{wt}
W_{\hat\sigma}(\hat{T}_n)\ge |U||V|+2\sum_{i=1}^k\left(\binom{d_i^+}{2}+\binom{d_i^-}{2}\right).
	\end{equation}
Indeed, the first term comes from all paths of odd length and the  term  $\binom{d_i^+}{2}+\binom{d_i^-}{2}$ comes from the paths of length 2 between two neighbors of  $u_i$ with the same sign. As the function $\binom{x}{2}=\frac{1}{2}x(x-1)$ is convex, we have
\begin{equation}\label{cv}
\sum_{i=1}^k\left(\binom{d_i^+}{2}+\binom{d_i^-}{2}\right)\ge 2k\binom{\frac{1}{2k}\sum_{i=1}^k(d_i^++d_i^-)}{2},
\end{equation}
by Jensen's Inequality. As $|U|=k$, $|V|=n-k$ and $\sum_{i=1}^k(d_i^++d_i^-)$ equals $n-1$, which is the number of edges in $\hat{T}_n$, we obtain from Eqs. \eqref{wt} and \eqref{cv} that 
	 \begin{eqnarray}\label{WT}
W_{\hat\sigma}(\hat{T}_n)&\ge& k(n-k)+4k\binom{\frac{n-1}{2k}}{2}\nonumber\\
&= &kn+\frac{n^2}{2k}-k^2+\frac{1}{2k}((2k+1)-(2k+2)n)\nonumber\\
&\ge&kn+\frac{n^2}{2k}-k^2-2n.
\end{eqnarray}
Using the basic inequality $a+b\ge 2\sqrt{ab}$ for $a,b>0$, we have
\begin{equation}\label{uvb}
kn+\frac{n^2}{2k}\ge 2\sqrt{{\frac{n^3}{2}}}=\sqrt{2}n^\frac{3}{2}.
\end{equation} Recall that $k\le n/2$. Thus $n-k\ge n/2$. If $k\ge 2\sqrt{2n}$ then from the trivial inequality $W_{\hat\sigma}(\hat{T}_n)\ge k(n-k)$ we obtain 
$$W_{\hat\sigma}(\hat{T}_n)\ge (2\sqrt{2n}) \cdot\frac{n}{2}=\sqrt{2}n^\frac{3}{2}.$$ 
Now assume $k<2\sqrt{2n}$. Then by \eqref{WT} and \eqref{uvb}, we find
$$W_{\hat\sigma}(\hat{T}_n)\ge \sqrt{2}n^\frac{3}{2}-k^2-2n\ge \sqrt{2}n^\frac{3}{2}-10n=(\sqrt{2}+o(1))n^\frac{3}{2}.$$
Thus  we always have $W_{\hat\sigma}(\hat{T}_n)\ge(\sqrt{2}+o(1))n^{\frac{3}{2}}$, as desired.
\end{proof}
The following theorem is a direct consequence of Lemmas \ref{wtn} and \ref{optlb}.
\begin{theorem}
$$\limsup\limits_{n\rightarrow\infty} \frac{W_\sigma(T_n)}{W_{\hat\sigma}(\hat{T}_n)}\le \sqrt{2}.$$ 
\end{theorem}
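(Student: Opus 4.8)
The plan is to combine the two asymptotic estimates already established, namely the upper bound $W_\sigma(T_n)=(2+o(1))n^{3/2}$ from Lemma \ref{wtn} and the lower bound $W_{\hat\sigma}(\hat{T}_n)\ge(\sqrt{2}+o(1))n^{3/2}$ from Lemma \ref{optlb}. The essential observation is that both quantities grow like $n^{3/2}$, so that the dominant powers cancel in the ratio and only the leading constants survive.

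First I would form the quotient and substitute the two estimates. Since Lemma \ref{optlb} guarantees that the denominator is bounded below by a positive multiple of $n^{3/2}$ for all sufficiently large $n$, the quotient is well-defined and the inequality direction is preserved when we divide the upper bound for the numerator by the lower bound for the denominator:
$$\frac{W_\sigma(T_n)}{W_{\hat\sigma}(\hat{T}_n)}\le \frac{(2+o(1))n^{\frac{3}{2}}}{(\sqrt{2}+o(1))n^{\frac{3}{2}}}=\frac{2+o(1)}{\sqrt{2}+o(1)}.$$

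Next I would let $n\to\infty$. The factors of $n^{3/2}$ cancel exactly, and the remaining expression tends to $2/\sqrt{2}=\sqrt{2}$ as the $o(1)$ terms vanish. Taking the $\limsup$ of both sides then yields
$$\limsup_{n\rightarrow\infty}\frac{W_\sigma(T_n)}{W_{\hat\sigma}(\hat{T}_n)}\le\sqrt{2},$$
which is exactly the claimed bound.

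There is essentially no obstacle remaining at this stage, since all the analytic work resides in Lemmas \ref{wtn} and \ref{optlb}. The only point requiring a little care is the handling of the $o(1)$ terms: one must use that Lemma \ref{optlb} provides a genuine lower bound (the inequality $\ge$, not mere asymptotic equality) so that dividing by it preserves the direction of the estimate, and that the denominator stays bounded away from zero relative to $n^{3/2}$ so the ratio is controlled uniformly for large $n$. Both facts are immediate from the statements assumed, so the argument is a short computation.
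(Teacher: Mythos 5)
Your proposal is correct and matches the paper exactly: the paper states the theorem as a direct consequence of Lemmas \ref{wtn} and \ref{optlb}, which is precisely the ratio computation you carry out, giving $2/\sqrt{2}=\sqrt{2}$. Your added care about the denominator being bounded below by a positive multiple of $n^{3/2}$ is the right (and only) point to check.
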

We end this note by leaving the following problem suggested by one referee.
\begin{problem}
Is it true that $$\lim\limits_{n\rightarrow\infty} \frac{W_\sigma(T_n)}{W_{\hat\sigma}(\hat{T}_n)}=1?$$
\end{problem}
	\section*{Acknowledgments}
	The authors would like to thank the
	anonymous reviewer for her/his instructive suggestions.  In particular, the results in the final section are suggested by the reviewer. We thank Sam Spiro for pointing out a preliminary version of Lemma \ref{optlb}. The second author is supported by the	National Natural Science Foundation of China under the grant number 12001006. 

\end{document}